\newcommand{\Z}{\mathbb{Z}}
\newcommand{\N}{\mathbb{N}}
\newcommand{\E}{\mathbb{E}}
\newcommand{\F}{\mathbb{F}}
\newcommand{\T}{\mathcal{T}}
\DeclareMathOperator{\VCdim}{VCdim}
\DeclareMathOperator{\Cay}{Cay}
\DeclareMathOperator{\RC}{RC}
\newcommand{\geqs}{\geqslant}
\newcommand{\leqs}{\leqslant}
\newtheorem{theorem}{Theorem}
\newtheorem{lemma}[theorem]{Lemma}
\newtheorem{conjecture}[theorem]{Conjecture}
\theoremstyle{definition}
\newtheorem{definition}{Definition}[section]
\theoremstyle{remark}
\newtheorem{remark}[definition]{Remark}
\begin{document}
 
\title{The VC-dimension of random subsets of finite groups}

\author{Brad Rodgers}
\email{\url{brad.w.rodgers@gmail.com}}
\address{Department of Mathematics and Statistics, Queen's University, University Avenue, K7L 3N6, Kingston, Canada}
\thanks{BR is partially supported by an NSERC grant.}

\author{Anurag Sahay}
\email{\url{anuragsahay@purdue.edu}}
\address{Department of Mathematics, Purdue University, West Lafayette, IN 47907, USA}
\thanks{AS is partially supported by Trevor Wooley's start-up funding at Purdue University and by the AMS-Simons Travel Grant}

\begin{abstract}
For a random subset of a finite group $G$ of cardinality $N$, we consider the VC-dimension of the family of its translates (equivalently the VC-dimension of a random Cayley graph) and prove a law of large numbers as $N\rightarrow\infty$. This answers a question of McDonald--Sahay--Wyman.
\end{abstract}

\maketitle

\section{Introduction}
\label{sec: intro}

\subsection{Random subsets}
The purpose of this note is to give an estimate which holds asymptotically almost surely for the VC-dimension associated to random subsets of finite groups, or equivalently for random Cayley graphs. In order to fix our terminology we recall some key definitions.

A \emph{set system} $\mathcal{F}$ is a family of susbsets of a ground set $\Omega$. \emph{The VC-dimension} of the set system $\mathcal{F}$, named after Vapnik and Chervonenkis \cite{VC}, roughly quantifies how ``rich" the set system $\mathcal{F}$ is. It has its origins in statistics and computational learning theory \cite{shalev2014understanding}, but has also become an important concept in combinatorics and model theory \cite{conantpillayterry,sisask,conantpillay,zarankiewicz,foxpachsuk,foxpachsuk2,iosevichphamsengertait,terrywolf} (see also \cite{terrywolfnotVC}). %\cite{matousek}
 It is defined using the following set up. For a subset $U$ of $\Omega$, the \emph{restriction} of $\mathcal{F}$ on $U$ is the family
\[
\mathcal{F}|_U:=\{F \cap U:\, F \in \mathcal{F}\}.
\]
We say that $U$ is \emph{shattered} by $\mathcal{F}$ if $\mathcal{F}|_U$ consists of all $2^{|U|}$ subsets of $U$. Furthermore instead of writing  $K \in \mathcal{F}|_U$ we will sometimes say that the subset $K$ is \emph{cut out} from $U$ by $\mathcal{F}$. The \emph{VC-dimension} of the $\mathcal{F}$, denoted $\VCdim(\mathcal{F})$ is then defined to be the maximum cardinality of a shattered subset of $\Omega$.

If $G$ is a finite group written multiplicatively and $A$ is some subset of $G$, we define the set system $\T_A$ to be the family $\{tA:\, t \in G\}$ of left translates of $A$, with ground set $G$. Furthermore when there is no chance of confusion we will use the abbreviation
\[
\VCdim(A) := \VCdim(\T_A).
\]
Recently several authors (see e.g. \cite{conantpillayterry,sisask,conantpillay,alonfoxzhao, MSW,terrywolf}) have used or studied the properties of $\VCdim(A)$ (or closely related notions, see Remark \ref{remark: alternate_def}) in a variety of ambient groups.

In this paper we will be interested in estimating $\VCdim(A)$ when the subset $A$ has been chosen randomly from a large finite group $G$; in particular, we will consider Bernoulli sampled subsets. For a finite group (or set) $G$ of cardinality $N$, we say that a random subset $A \subseteq G$ is \emph{Bernoulli sampled with parameter $p \in [0,1]$} if for all $x$ in the group, the events $x \in A$ are independent and each occurs with probability $p$. In particular the distribution of $A$ is given by
\[
\Pr[A=W] = p^{|W|} (1-p)^{N-|W|},
\]
for any deterministic $W \subseteq G$.

Our main result is as follows.

\begin{theorem}
\label{thm: vcdim_translates}
Let $0 < p < 1$ be a fixed parameter and set $r = [\min(p,1-p)]^{-1}$. Let $G$ be a group of cardinality $N$ and let $A$ be a random subset of $G$ that is Bernoulli sampled with parameter $p$. Then for any $\eta >0$ we have
\[
|\VCdim(A) - \log_r N| \leqs 10 \log_r \log_r N,
\]
with probability $1-O(1/N^\eta)$. Here and throughout $\log_r$ refers to logarithms to the base $r> 1$.
\end{theorem}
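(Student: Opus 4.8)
The plan is to reduce shattering to a statement about membership patterns of translates, and then prove matching upper and lower bounds by a first-moment argument and an independence-based counting argument, respectively.

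First I would reformulate the event. Writing $U=\{u_1,\dots,u_k\}$ and setting $g=t^{-1}$, the trace $tA\cap U$ is encoded by the vector $v_g=(\1[gu_1\in A],\dots,\1[gu_k\in A])\in\{0,1\}^k$, and $U$ is shattered exactly when every $S\in\{0,1\}^k$ occurs as some $v_g$. For a fixed $g$ the elements $gu_1,\dots,gu_k$ are distinct, so the coordinates of $v_g$ are independent $\Bernoulli(p)$ and $\Pr[v_g=S]=p^{|S|}(1-p)^{k-|S|}\geqs r^{-k}$. Two traces $v_g,v_{g'}$ are independent as soon as $gU\cap g'U=\emptyset$, i.e. $g^{-1}g'\notin UU^{-1}$; since $|UU^{-1}|\leqs k^2$, a greedy selection produces a family $T\subseteq G$ with $|T|\geqs N/(2k^2)$ whose traces $\{v_g\}_{g\in T}$ are mutually independent. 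Finally, shattering is translation invariant, since $\T_A|_{gU}=g\cdot(\T_A|_U)$. Throughout set $L:=\log_r N$.

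For the lower bound I would fix any $U$ with $|U|=k=\floor{L-10\log_r L}$ and pass to the independent family $T$. For each pattern $S$ the number of $g\in T$ with $v_g=S$ is binomial with mean $\geqs |T|\,r^{-k}\geqs (\log_r N)^{8}/2$, using $Nr^{-k}\geqs (\log_r N)^{10}$ and $k^2\leqs L^2$. Hence $\Pr[S\text{ is missed}]\leqs \exp(-c(\log_r N)^8)$, and a union bound over the at most $2^k\leqs N$ patterns shows that $U$ is shattered with failure probability $\ll N^{-\eta}$. This yields $\VCdim(A)\geqs L-10\log_r L$; genuine independence on $T$ makes this direction comparatively soft.

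For the upper bound I would put $k=\lceil L+10\log_r L\rceil$ and estimate the expected number of shattered $k$-sets. By translation invariance the all-zeros pattern may be realized at $g=e$, so (taking $p\geqs 1/2$; the case $p<1/2$ is symmetric with $A$) it suffices to bound $\E[\#\{U\subseteq A^c:\ |U|=k,\ U\text{ shattered}\}]$. A shattered $U\subseteq A^c$ must in addition realize every pattern of weight $1$ and $2$, and over all translates the weight-$j$ pattern is realized with probability at most $\mu_j:=Np^j(1-p)^{k-j}$; since $Nr^{-k}\leqs(\log_r N)^{-10}$ one gets $\mu_1,\mu_2\leqs C_p(\log_r N)^{-10}$. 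If these $k+\binom{k}{2}$ events behaved independently the shattering probability would be at most $(1-p)^k\prod_{|S|\in\{1,2\}}\mu_{|S|}$, with dominant factor $\mu_2^{\binom{k}{2}}\leqs\exp(-\Omega(L^2\log L))$. As $\binom{N}{k}\leqs\exp((\ln r)L^2)$, the first moment would then be at most $\exp(L^2(\ln r-\Omega(\log L)))=o(N^{-\eta})$, giving $\VCdim(A)\leqs L+10\log_r L$.

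The hard part is exactly the independence assumption in the last step. The events ``pattern $S$ is realized'' are genuinely dependent, because witnesses for different patterns can reuse the same coordinates of $A$ when their translates overlap, and one cannot afford to union bound over witness assignments: that would lose a factor $N^{\#\text{patterns}}=\exp(\Omega(L^3))$ and swamp the gain. The crux is therefore a near-product upper bound for the joint realization probability, tight up to sub-dominant factors of size $\exp(O(L^2))$. I would obtain this by revealing the randomness pattern-by-pattern and checking that, because each low-weight pattern is rare and only a negligible fraction of coordinates has been exposed, the conditional probability of realizing the next pattern stays $\leqs(1+o(1))\mu_{|S|}$; alternatively a Janson- or second-moment-type estimate on the overlap graph of witness translates should serve. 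Controlling these correlations precisely enough to survive the union bound over all $\binom{N}{k}$ sets is the technical heart of the argument.
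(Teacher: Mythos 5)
Your lower bound is sound and is essentially the paper's own argument: your greedy family $T$ of translates with pairwise disjoint supports is exactly the tiling lemma (Lemma~\ref{lem: tilingv2}), independence on disjoint translates gives $\Pr[S\text{ is missed}] \leqs (1-r^{-k})^{|T|}$, and a union bound over the $2^k$ patterns finishes. The genuine gap is in the upper bound, and you flag it yourself: your first-moment computation rests on a near-product estimate $\Pr[U\text{ shattered}] \lessapprox \prod_S \mu_{|S|}$ for the joint probability that all weight-$1$ and weight-$2$ patterns are realized, and this is never proved. It is not a routine technicality that the tools you name can patch. The events ``pattern $S$ is realized by some translate'' are non-monotone existence events whose witnesses may overlap: Janson-type inequalities bound the probability that \emph{none} of a family of increasing events occurs, and second-moment arguments give \emph{lower} bounds on existence probabilities, so both aim at the wrong inequality; van den Berg--Kesten-type product bounds also fail because distinct patterns need not be realized on disjoint sets of coordinates of $A$. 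Your exposure argument has a concrete failure mode: after conditioning on the witnesses of earlier patterns, a translate $gU$ may lie entirely (or mostly) inside the exposed region, and if the exposed values happen to display the next pattern there, its conditional realization probability is $1$ rather than $(1+o(1))\mu_{|S|}$. Ruling this out is precisely the correlation control you set aside, and there is very little room for loss: $\binom{N}{k} = \exp(\Theta(L^2))$ while your product bound wins only by a logarithmic factor in the exponent, $\exp(-\Omega(L^2\log L))$, so any correction of comparable size destroys the argument (and, as you note, union-bounding over witness assignments costs $\exp(\Omega(L^3))$, which is fatal).

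It is instructive to compare with how the paper closes exactly this gap. Rather than multiplying probabilities of overlapping pattern events, it uses the covering lemma of Bollob\'as--Janson--Riordan (Lemma~\ref{lem: covering}) to write $\T_A$ as a union of only $m \ll k^3$ subfamilies $\T_A^{(j)}$, each consisting of translates whose supports are pairwise disjoint. If $U$ is shattered, then all $\binom{k}{10}$ subsets of $U$ of cardinality $k-10$ are cut out, so by pigeonhole a single subfamily must cut out at least $\binom{k}{10}/m \geqs k^6$ of them; within one subfamily the relevant indicators $X_v = \1(|A\cap (t_j^{-1}s_vU)| = k-10)$ are genuinely independent with total mean $O(k^{-2})$, and Chernoff gives $\exp(-Ck^6\log k)$, which survives the union bound over all $\binom{N}{k}$ sets with enormous room to spare. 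The key move --- counting how many subsets of a single large cardinality $k-10$ are cut out by one disjoint family, instead of demanding a joint bound over many distinct patterns realized anywhere --- is what substitutes for the unproven near-independence step in your sketch. As written, your upper bound is a gap, not a proof.
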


Thus in particular, asymptotically almost surely
\[
\VCdim(A) = (1+o(1)) \log_r N.
\]

In this theorem, the implicit constant may depend on $p$ and $\eta$ but is uniform over groups $G$. The constant $10$ is just chosen to be sufficiently large and we have made no attempt to optimize it. 

\begin{remark} 
\sloppy Note that a standard subset counting argument yields that $0 \leqs \VCdim(A) \leqs \log_2 N$ for any subset $A \subseteq G$. Thus, in the balanced case $p = 1/2$, we have that the VC-dimension is likely to be essentially as large as possible.
\end{remark}

\begin{remark} 
Theorem \ref{thm: vcdim_translates} should hold even if $p = p(N)$ tends to zero sufficiently slowly with $N$. An estimate of this sort can likely be extracted from the proof below, however it is not clear how quickly $p(N)$ can be allowed to decrease with such an estimate remaining true, and we do not pursue this question further. 
\end{remark}

\begin{remark}
\label{remark: alternate_def}
Our definition of $\VCdim(A)$ differs slightly from that adopted by Sisask \cite{sisask}, who uses the family $\{tA \cap A:\, t \in A\cdot A^{-1}\}$ in place of $\{tA:\, t\in G\}$. Sisask is motivated by additive combinatorics in his definition but observes \cite[Proposition~4.1]{sisask} that the VC-dimensions of these two families differ by at one most $1$ however, so an estimate like Theorem \ref{thm: vcdim_translates} remains true under either convention.
\end{remark}

\subsection{Random Cayley graphs}

This result can also be interpreted in terms of Cayley graphs.

We work with directed graphs on a finite set of vertices. For a graph $\Gamma$, we identify the graph with its vertex set, writing $u \in \Gamma$ to mean that $u$ is a vertex in $\Gamma$. Further, we use the notation $u \to v$ in $\Gamma$ to mean that there is an edge from the vertex $u$ to the vertex $v$. We allow loops but disallow multiple edges (having both $u \to v$ and $v \to u$ is not considered a multiple edge). 

The VC-dimension of a graph $\Gamma$ is defined to be the VC-dimension of the set system on $\Gamma$ given by the family of neighborhoods, 
\[ \mathcal{N}_\Gamma = \{ N(v) : v \in \Gamma \}, \]
where $N(v) = \{ w \in \Gamma : v \to w \text{ in } \Gamma \}$. We denote this by $\VCdim(\Gamma):=\VCdim(\mathcal{N}_\Gamma)$. See \cite{anthonybrightwellcooper,MSW,foxpachsuk,foxpachsuk2,PSTT} for this and closely related definitions. 

If $G$ is a finite group and $A \subseteq G$ is a subset, recall that the \emph{Cayley (di)graph} $\Gamma = \Cay(G,A)$ is the graph with vertex set $G$ such that $u \to v$ in $\Gamma$ if there exists an $a \in A$ such that $v = ua$. It thus follows that $\VCdim(\Gamma) = \VCdim(A)$ since $\mathcal{N}_\Gamma = \mathcal{T}_A$.

For a fixed group $G$, let us form a random graph $\Gamma = \Cay(G,A)$ by letting $A$ be a random subset of $G$ that is Bernoulli sampled with parameter $p$. We shall denote this by $\Gamma \sim \RC(G,p)$. Theorem \ref{thm: vcdim_translates} can therefore be stated in the following way.

\begin{theorem}
\label{thm: vcdim_graphs}
Let $0 < p < 1$ be a fixed parameter and set $r = [\min(p,1-p)]^{-1}$. Let $G$ be a group of cardinality $N$ and let $\Gamma \sim \RC(G,p)$ be a random Cayley graph on $G$. Then for any $\eta>0$
\[
|\VCdim(\Gamma) - \log_r N| \leqs 10 \log_r \log_r N
\]
with probability $1-O(1/N^\eta)$.
\end{theorem}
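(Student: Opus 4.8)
Since $\VCdim(\Gamma)=\VCdim(A)$ for $\Gamma=\Cay(G,A)$, it suffices to prove Theorem~\ref{thm: vcdim_translates}. Replacing $A$ by its complement turns $\T_A$ into $\{(tA)^c:t\in G\}=\{t A^c : t\in G\}$, and complementing every member of a set system preserves all shattering relations, hence the VC-dimension; as $A^c$ is Bernoulli sampled with parameter $1-p$ and $\min(p,1-p)$ is unchanged, I may assume $p\leqs 1/2$, so $r=1/p$. The basic reformulation is this: fix $U=\{u_1,\dots,u_k\}$ and, writing a translate as $t=s^{-1}$, associate to each $s\in G$ the \emph{pattern} $\phi_s=(\1[su_1\in A],\dots,\1[su_k\in A])\in\{0,1\}^k$; then $U$ is shattered by $\T_A$ exactly when $\{\phi_s:s\in G\}=\{0,1\}^k$. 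For a fixed $s$ the $k$ coordinates of $\phi_s$ are independent, so a given pattern $K$ is realized by $s$ with probability $q_K=p^{|K|}(1-p)^{k-|K|}$, minimized by the all-ones pattern where it equals $r^{-k}$. The feature absent in the Erd\H{o}s--R\'enyi setting is dependence between translates: $\phi_s$ and $\phi_{s'}$ share a coordinate (and are otherwise independent) precisely when $s^{-1}s'\in UU^{-1}$, so each $\phi_s$ is correlated with fewer than $k^2$ of the others. Controlling this mild dependence is the crux of both bounds.

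For the lower bound I would fix any $k$-set $U$ with $k=\lfloor\log_r N-4\log_r\log_r N\rfloor$ and extract genuine independence. Greedily choose parameters $s_1,\dots,s_m$ with $m=\lfloor N/k^2\rfloor$ so that $s_1U,\dots,s_mU$ are pairwise disjoint: keeping a new $s_j$ off the earlier translates forbids $s_j\in s_{j'}(UU^{-1})$ for $j'<j$, ruling out at most $(j-1)|UU^{-1}|\leqs m k^2\leqs N$ elements, so a valid choice always remains. By disjointness the $\phi_{s_1},\dots,\phi_{s_m}$ depend on disjoint blocks of the independent membership indicators and are therefore i.i.d.\ samples from the distribution $q$. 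Thus $U$ is already shattered by these $m$ translates once the samples hit every pattern, and a union bound gives
\[
\Pr[U\text{ not shattered}]\leqs \sum_{K\in\{0,1\}^k}(1-q_K)^m\leqs 2^k e^{-m r^{-k}}.
\]
Since $m r^{-k}\asymp(\log_r N)^2$ dominates $\log N$ while $2^k=N^{\log_r 2}$, this is $O(N^{-\eta})$, giving $\VCdim(A)\geqs\log_r N-4\log_r\log_r N$ asymptotically almost surely.

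For the upper bound I would run a first-moment argument over $k$-sets with $k=\lceil\log_r N+c\log_r\log_r N\rceil$ for a constant $c\leqs 10$, showing $\E[\#\{\text{shattered }k\text{-sets}\}]=O(N^{-\eta})$; as there are at most $N^k$ sets, the task is to bound $\Pr[U\text{ shattered}]$ by roughly $N^{-k-\eta}$. The decisive point is that covering all $2^k$ patterns is far more demanding than covering the rarest one: although $\Pr[\text{all-ones realized}]\leqs N r^{-k}=(\log_r N)^{-c}$ is only of subpolynomial size, shattering forces the realization of an entire critical layer. Concretely, choose the weight $w^*=k-i_0$ so that its target probability $q^*=p^{w^*}(1-p)^{i_0}$ satisfies $Nq^*\leqs 1/2$; this forces $i_0\asymp\log_r\log_r N$ and produces $M=\binom{k}{i_0}$ patterns of that weight, each of which must be realized exactly. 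Bounding $\Pr[U\text{ shattered}]$ by a union over \emph{certificates}—assignments of one translate $s_K$ to each such pattern realizing it exactly—the generic certificates, whose required elements $s_K^{-1}u_i$ are all distinct, contribute independent events and hence total at most $N^{M}(q^*)^M=(Nq^*)^M\leqs 2^{-M}$. Because $\log M\asymp(\log_r\log_r N)^2$ exceeds every constant multiple of $\log_r\log_r N$, the count $M$ outgrows every power of $\log N$, so $M\gg k\log N$ and $N^k 2^{-M}\to 0$ with room to spare, yielding $\VCdim(A)\leqs\log_r N+c\log_r\log_r N$.

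The step I expect to be the main obstacle is making that last estimate rigorous, since the certificate sum also includes \emph{non-generic} tuples in which several required elements $s_K^{-1}u_i$ coincide. Each coincidence raises the per-certificate probability—an element demanded to lie in $A$ by two patterns contributes a single factor $p$ rather than two—while simultaneously cutting the number of admissible tuples, and one must check that this trade-off always favours the generic count so that the non-generic contributions are of lower order. This is precisely where the dependence between translates identified in the first paragraph re-enters, and it is analogous to (though more delicate than) verifying that the diagonal terms dominate in a second-moment calculation; the large gap $M\gg k\log N$ lends this comparison a comfortable margin. Combining the two bounds and noting $4,c\leqs 10$ then gives $|\VCdim(A)-\log_r N|\leqs 10\log_r\log_r N$ with probability $1-O(N^{-\eta})$, which is exactly why the unoptimized constant $10$ is more than sufficient.
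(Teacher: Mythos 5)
Your reduction of Theorem \ref{thm: vcdim_graphs} to Theorem \ref{thm: vcdim_translates}, the complementation argument reducing to $p \leqs 1/2$, and your lower bound are all correct. The lower bound is essentially the paper's own argument: your greedy construction of $\lfloor N/k^2 \rfloor$ pairwise disjoint translates $s_jU$ is exactly Lemma \ref{lem: tilingv2}, and the rest (independence across disjoint translates, then a union bound over all $2^k$ patterns) is Steps 1--2 of Section \ref{sec: lower}; using the constant $4$ in place of $10$ only strengthens the conclusion, so that is fine.

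The upper bound, however, has a genuine gap, located exactly at the step you flag --- and the difficulty is worse than the ``check that diagonal terms dominate'' you anticipate, because the claim that the coincidence trade-off \emph{always favours the generic count} is false for structured $U$. Your union bound must handle every $U$ of size $k$, so suppose $G$ contains a subgroup $U$ of order $k$. Then $sU = s_1U$ for every $s \in s_1U$, and on the single event $A \cap s_1U = s_1W'$ with $W' \subseteq U$, $|W'| = w^*$ --- an event of probability $q^*$, one factor, not $(q^*)^k$ --- every translate $s = s_1h$, $h \in U$, realizes the pattern $h^{-1}W'$ exactly. Thus one block of $k$ constrained elements certifies up to $k$ \emph{distinct} patterns of weight $w^*$ simultaneously. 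In certificate-sum terms: covering the whole orbit $\{h^{-1}W' : h \in U\}$ by such a block costs about $(N/k)\cdot k$ choices (the coset and the representative) times one probability factor $q^*$, i.e.\ a total contribution of about $Nq^* \approx 1/2$ for $k$ patterns, whereas your generic accounting charges $(Nq^*)^k \approx 2^{-k}$ for the same patterns. The non-generic certificates therefore dominate per pattern, and the best bound the certificate sum can yield degrades from $2^{-M}$ to roughly $2^{-M/k}$; analogous inflation comes from chains of partially overlapping translates when $U$ is an arithmetic progression in an abelian $G$. As it happens, $2^{-M/k}$ would still beat $N^k$ because your $M$ is super-polylogarithmic, but establishing that \emph{no} overlap structure, for \emph{any} $U$, does worse than this requires classifying how many patterns a connected block of constrained elements can certify relative to the number of translate choices it consumes. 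That bookkeeping is the actual mathematical content of the upper bound, and it is absent from your sketch.

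The paper's proof is designed precisely to avoid this dependence problem. Combining Lemma \ref{lem: tilingv2} with the Bollob\'as--Janson--Riordan covering lemma (Lemma \ref{lem: covering}), it writes $\T_A = \bigcup_{j=1}^m \T_A^{(j)}$ with $m \ll k^3$, where each family $\T_A^{(j)}$ consists of $\ell$ translates whose associated sets $t_j^{-1}s_vU$, $1 \leqs v \leqs \ell$, are pairwise disjoint. If $U$ is shattered, all $\binom{k}{10}$ patterns of size $k-10$ are cut out, so by pigeonhole a single family cuts out at least $\binom{k}{10}/m \geqs k^6$ of them; within one family distinct patterns require distinct translates, so at least $k^6$ of the \emph{independent} events $|A \cap t_j^{-1}s_vU| = k-10$ occur, while their expected number is $\ll k^{-2}$. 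Chernoff's inequality then gives $\Pr[U\text{ is shattered}] \ll e^{-Ck^6}$, which survives the union over $\binom{N}{k} \leqs r^{k^2}$ sets $U$. The key device is to demand many translates with the right intersection \emph{size} inside one disjoint family, rather than demanding specific patterns realized anywhere in $G$: independence is then available by construction and no certificate combinatorics is needed. If you wish to salvage your argument, restricting your certificates to translates drawn from such pigeonholed families is, in effect, this proof.
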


Thus, as before, $\VCdim(\Gamma) =(1+o(1)) \log_r N$ asymptotically almost surely. In this form, our result answers a question raised by McDonald--Sahay--Wyman in \cite[Section~5.3]{MSW}.

\begin{remark}
In \cite{anthonybrightwellcooper}, the VC-dimension of a graph is defined in terms of the family of \emph{closed} neighborhoods $\{N(v) \cup \{v\}:\, v \in \Gamma\}$ in contrast to our definition using open neighborhoods. We expect Theorem~\ref{thm: vcdim_graphs} should hold in this case also; this would correspond, in Theorem~\ref{thm: vcdim_translates}, to sampling the set $A$ randomly except for the identity element which is always included in $A$.
\end{remark}

\begin{remark}
This result can be compared to the VC-dimension of random Erd\H{o}s-R\'enyi  graphs \cite{anthonybrightwellcooper} where a similar result is found, at least for small $p$. (By contrast, a rather different behavior for the VC-dimension emerges when considering a randomly sampled set family \cite{ycartratsaby}.)
\end{remark}

\begin{remark}
In the case that $G$ is a finite abelian group (now written additively), several authors \cite{green,greenmorris,konyaginshkredov,mrazovic}, have considered the \emph{Cayley sum graph}\footnote{Strictly speaking, Mrazovi\'c \cite{mrazovic} does not assume the group is abelian, so his graphs should be called Cayley product graphs.} $\Gamma^+$ associated to $G$ and a random subset $A$. This is an undirected graph with vertex set consisting of the elements of $G$, with an edge between $x$ and $y$ if $x+y \in A$. It is easy to see that\footnote{It follows from the observation that the neighborhood of $v$ in $\Gamma^+$ is precisely the neighborhood of $-v$ in $\Gamma$.} $\VCdim(\Gamma^+) = \VCdim(A)$ so that Theorem \ref{thm: vcdim_translates} applies to these graphs as well.
\end{remark}

\begin{remark} \label{rem: graphmodels}
There are graph models in the literature other than the ones discussed above that are also called random Cayley graphs; see e.g. \cite{alon,alonroichman,conlonfoxphamyepremyan}. Two variations are of particular interest. In one, the random Cayley graph is forced to have fixed regularity by randomly sampling all subsets $S \subseteq G$ of cardinality $d = pN$. In the other, the generating set of the Cayley graph is preconditioned to be symmetric, for example by using $S \cup S^{-1}$ as the generating set instead of $S$. It would be interesting to study the analogues of Theorem~\ref{thm: vcdim_graphs} for these models; the authors expect a similar result to hold.
\end{remark}

\subsection{Motivation} \label{subsec: numtheory}

Our motivation for Theorems \ref{thm: vcdim_translates} and \ref{thm: vcdim_graphs} is number theoretic. 

Let us use the terminology of Theorem \ref{thm: vcdim_graphs}; taking $G = \mathbb{Z}/N\mathbb{Z}$ and setting $p=1/2$ we find that for $\Gamma \sim \RC(\mathbb{Z}/N\mathbb{Z},1/2)$, asymptotically almost surely we have
\[ \VCdim(\Gamma) = (1+o(1))\log_2 N.\] 
(In fact for a sequence of random graphs $\Gamma_N \sim \RC(\mathbb{Z}/N\mathbb{Z},1/2)$ chosen independently, this relation holds almost surely by Borel-Cantelli.)

This provides ancillary evidence for the behavior of quadratic residues in $\mathbb{Z}/N\mathbb{Z}$ where $N$ is prime. If $S$ is deterministically chosen to be the set of quadratic residues in a prime field, the graph $\Gamma = \Cay(\mathbb{Z}/N\mathbb{Z},S)$ is called the Paley (di)graph. This is a well-known example of a deterministic family of graphs which behave pseudorandomly, having many of the same properties as the balanced Erd\H{o}s-R\'enyi graph \cite{chunggrahamwilson}. Conversely, it is reasonable to expect that for many problems, the behavior of Paley graphs may be conjecturally understood by solving the corresponding problem for the graph model $\RC(\Z/N\Z,1/2)$. To this end, Theorem~\ref{thm: vcdim_graphs} lends credence to the following conjecture, which is a special case of \cite[Conjecture~1.3]{MSW}.
\begin{conjecture}[McDonald--Sahay--Wyman]
\label{conj: quadres}
Let $\Gamma = \Gamma(N)$ be the Paley (di)graph on $\Z/N\Z$ with $N$ prime. Then, as $N \to \infty$ through the primes, 
\[ \VCdim(\Gamma) = (1 + o(1))\log_2 N. \]
\end{conjecture}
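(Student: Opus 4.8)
Since this is a conjecture rather than a proven statement, the plan is to lay out the approach a proof would take, show how far it reaches with current tools, and isolate the essential obstruction.

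\emph{Reduction to character sums.} Let $\chi$ denote the quadratic (Legendre) character modulo the prime $N$, so that the generating set $S$ of the Paley digraph is $\{x : \chi(x) = 1\}$ and $\Gamma = \Cay(\Z/N\Z, S)$. By the discussion preceding Theorem \ref{thm: vcdim_graphs} we have $\VCdim(\Gamma) = \VCdim(\T_S)$, and a set $\{u_1, \dots, u_k\}$ of distinct points is shattered by $\T_S$ precisely when, for every sign vector $\epsilon \in \{\pm1\}^k$, there is some $v$ with $\chi(u_i - v) = \epsilon_i$ for all $i$; equivalently the map $v \mapsto (\chi(u_1 - v), \dots, \chi(u_k - v))$ is surjective onto $\{\pm 1\}^k$ (the at most $k$ values $v = u_i$, where $\chi = 0$, being harmless). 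The upper bound $\VCdim(\Gamma) \le \log_2 N$ is the trivial counting bound recorded in the remark above, so the entire content of the conjecture is the matching lower bound $\VCdim(\Gamma) \ge (1 - o(1))\log_2 N$: one must exhibit a shattered set of size $(1-o(1))\log_2 N$.

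\emph{What Weil gives.} Detecting the pattern $\epsilon$ through the indicators $\tfrac12(1 + \epsilon_i \chi(u_i - v))$ and expanding the product, the number of realizing $v$ equals $2^{-k}\sum_v \prod_i (1 + \epsilon_i \chi(u_i - v))$. The diagonal ($T = \emptyset$) term contributes the main term $N/2^k$, while each nonempty $T \subseteq [k]$ contributes a complete multiplicative character sum $\sum_v \chi\big(\prod_{i \in T}(u_i - v)\big)$. As the $u_i$ are distinct, $\prod_{i\in T}(X - u_i)$ is squarefree of degree $|T|$ and is not a perfect square, so Weil's bound (the Riemann Hypothesis for curves over finite fields) gives each such sum is $O(|T|\sqrt N)$. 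Summing over the $2^k - 1$ nonempty subsets, the count of realizing $v$ is $N/2^k + O(k \sqrt N)$, which is positive as soon as $2^k \ll \sqrt N / k$. This already yields the unconditional lower bound $\VCdim(\Gamma) \ge (\tfrac12 - o(1))\log_2 N$. Since the Weil input is uniform in the $u_i$, one gains nothing from the choice of shattered set at this level; taking $U = \{0, 1, \dots, k-1\}$ reduces the hardest (all-equal) patterns to the classical problem of long runs of consecutive quadratic residues.

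\emph{The main obstacle.} Closing the factor-of-two gap from $\tfrac12 \log_2 N$ to $\log_2 N$ is the crux. Pushing $k$ up to $(1-o(1))\log_2 N$ makes $2^k$ as large as $N^{1-o(1)}$, so the main term $N/2^k$ shrinks to $N^{o(1)}$ while each Weil error term remains of size $\sqrt N$ --- larger than the main term --- and there are $\approx N^{1-o(1)}$ of them. Pointwise bounds are therefore useless (and Weil is sharp for a single such sum), so any proof must extract genuine cancellation across the $2^k$ subsets $T$; in probabilistic language, it must show that the coordinates $\chi(u_1 - v), \dots, \chi(u_k - v)$ behave like independent fair coins for $k$ nearly $\log_2 N$. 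This cannot be evaded by passing to second moments: the variance computation shows only that almost every pattern occurs, and forcing the number of missing patterns below $1$ (which shattering demands) again stalls exactly at the $\tfrac12 \log_2 N$ threshold. A genuinely new ingredient is thus required --- a strong joint-equidistribution statement for shifted Legendre symbols going well beyond square-root cancellation, or a suitable conditional hypothesis --- which is precisely why the statement is posed as a conjecture. Theorem \ref{thm: vcdim_graphs} supplies the heuristic behind it: it is the idealized random model in which $\chi$ is replaced by independent signs, and it predicts the value $\log_2 N$ that the conjecture asserts the genuinely arithmetic Paley graph attains.
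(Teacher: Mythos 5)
This statement is a conjecture: the paper offers no proof of it, supplying only the heuristic backing of Theorem~\ref{thm: vcdim_graphs} as the idealized random model, and you rightly refrain from claiming one. Your assessment of the state of the art is accurate and consistent with what the paper records --- the Weil-bound computation you sketch is sound and yields the unconditional lower bound $(\tfrac12 - o(1))\log_2 N$, matching the bound $\underline{\alpha}^{(r)} \geqs \tfrac12 \log_r 2$ from \cite[Theorem~1.6]{MSW} cited in Section~\ref{subsec: numtheory} (specialized to $r=2$), and your identification of the factor-of-two barrier as the essential obstruction agrees with the discussion the paper points to in \cite[Section~5.1]{MSW}.
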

For more on this conjecture, including some partial progress and numerical evidence, see \cite{MSW}.

Furthermore, if $\Gamma \sim \RC(\mathbb{Z}/N\mathbb{Z}, 1/r)$, then asymptotically almost surely $\VCdim(\Gamma) = (1+o(1))\log_r N$. This would suggest that for $r \in \N$, $r \geqs 3$ such that\footnote{The congruence condition $r \mid N - 1$ is no imposition, since if $s \nmid N-1$, the set of $s$-th powers in $\Z/N\Z$ is the same as the set of $r$-th powers where $r = (s,N-1)$ is the gcd.} $N \equiv 1 \pmod r$, the Cayley graph generated by the set of $r$-th powers in $\mathbb{Z}/N\mathbb{Z}$ may have VC-dimension asymptotic to $(1+o(1))\log_r N$ as well. This is in contradistinction to \cite[Conjecture~1.5]{MSW}, where the authors conjecture that it must be asymptotic to $(1+o(1))\log_2 N$. Motivated by this discrepancy, we raise the following alternative conjecture.

\begin{conjecture}
\label{conj: higherres}
Let $r \geqs 3$ be a fixed integer and for $N \equiv 1 \pmod r$ prime, let $\Gamma = \Gamma(N)$ be the Cayley graph\footnote{with respect to the additive group of $\Z/N\Z$} $\Cay(\Z/N\Z,S)$ where $S \subseteq (\Z/N\Z)^\times$ is the multiplicative subgroup of $r$-th powers. Then, as $N \to \infty$ through the primes congruent to $1$ modulo $r$, 
\[ \VCdim(\Gamma) = (1 + o(1))\log_r N. \]
\end{conjecture}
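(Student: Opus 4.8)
The plan is to reduce the shattering condition to the simultaneous positivity of pattern counts and then to attack these counts with multiplicative character sums. Writing $\Z/N\Z$ additively, a set $U=\{u_1,\dots,u_k\}$ is shattered by $\T_S$ precisely when, for every pattern $\epsilon\in\{0,1\}^k$, the count
\[
N_\epsilon(U) := \#\{\, t\in\Z/N\Z : \1_S(u_i+t)=\epsilon_i \text{ for all } i \,\}
\]
is positive. Since $N$ is prime and $r\mid N-1$, the subgroup $S$ of $r$-th powers is detected by a multiplicative character $\chi$ of order $r$ via $\1_S(x)=\tfrac1r\sum_{j=0}^{r-1}\chi^j(x)$ for $x\neq0$ (extending all characters by $\chi(0)=0$, so the at most $k$ shifts hitting $0$ contribute only an $O(k)$ error). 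Substituting this into $N_\epsilon(U)$ and expanding the product yields a main term together with a family of complete character sums
\[
T_{\mathbf j}(U)=\sum_{t}\ \prod_{i=1}^k \chi^{j_i}(t+u_i), \qquad \mathbf j=(j_1,\dots,j_k)\in\{0,\dots,r-1\}^k .
\]

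First I would isolate the main term $c_{\mathbf 0}^{(\epsilon)}N$, whose density factor is $r^{-w}((r-1)/r)^{k-w}$ with $w=|\epsilon|$, and bound the remaining terms by Weil's theorem. For $\mathbf j\neq\mathbf 0$ the polynomial $f(X)=\prod_i(X+u_i)^{j_i}$ has distinct roots $\{-u_i:j_i\neq0\}$ and, because each exponent lies in $\{0,\dots,r-1\}$, is a perfect $r$-th power only when every $j_i$ vanishes; hence $f$ is never an $r$-th power for $\mathbf j\neq\mathbf 0$ and Weil gives $|T_{\mathbf j}(U)|\le(k-1)\sqrt N$ for \emph{any} distinct $u_i$. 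Summing the coefficients bounds the total error in $N_\epsilon(U)$ by $O((2(r-1)/r)^{k-w}\,k\sqrt N)$, which is dominated by the main term as soon as $r^k\ll\sqrt N/k$. The worst case is the all-ones pattern $w=k$, and one concludes that any $k$ distinct points are shattered once $k\le(\tfrac12-o(1))\log_r N$. This already yields the unconditional bound $\VCdim(\Gamma)\ge(\tfrac12-o(1))\log_r N$.

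The crux is passing from the exponent $\tfrac12$ to the conjectured $1$: the all-ones pattern requires detecting the set $\bigcap_i(S-u_i)$ of expected size $Nr^{-k}$, which drops below the Weil error $\sqrt N$ exactly at $k\approx\tfrac12\log_r N$. This is a genuine square-root barrier, of the same flavour as the obstruction to understanding sparse power-residue configurations, and it is why the conjecture is open. For the lower bound one needs only a single good $U$, so the natural route is a second-moment argument over a random choice of $U$: computing $\E_U[N_\epsilon(U)]$ and $\E_U[N_\epsilon(U)^2]$ reduces the fluctuation to additive correlations of $S$ itself (its additive energy and shifted-intersection counts), where strong bounds for multiplicative subgroups are available via sum-product phenomena. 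If one can show the variance is negligible for $k$ up to $(1-o(1))\log_r N$, a positive proportion of sets $U$ would be shattered, giving the full lower bound.

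The matching upper bound $\VCdim(\Gamma)\le(1+o(1))\log_r N$ is the remaining obstacle, and I expect it to be the hardest part when $r\ge3$: for $r=2$ it is the trivial counting bound $\log_2 N$, but for $r\ge3$ one has $\log_r N<\log_2 N$, so genuine savings are required. Here averaging over $U$ is useless, since ruling out shattering demands that \emph{every} $k$-set omit some pattern; equivalently one must force $N_{\mathbf 1}(U)=0$ for all $U$ with $k>(1+\delta)\log_r N$, which again lies beyond the reach of Weil's bound alone. Establishing such uniform control on the distribution of the shifted copies $S-u_i$ — or deriving it from a strong equidistribution hypothesis such as GRH — appears to be the essential new input needed to settle the conjecture.
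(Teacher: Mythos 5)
You should first note what the statement actually is: Conjecture~\ref{conj: higherres} is an open \emph{conjecture} in the paper, not a theorem. The paper offers no proof of it; its only support is the heuristic transfer from the random model, namely that Theorem~\ref{thm: vcdim_graphs} gives $\VCdim(\Gamma) = (1+o(1))\log_r N$ for $\Gamma \sim \RC(\Z/N\Z, 1/r)$, and the expectation that the $r$-th power residues behave pseudorandomly with density $1/r$. So there is no ``paper proof'' to match your argument against, and to your credit you do not claim to close the conjecture: your write-up is a research program, not a proof. Its one solid component --- the reduction of shattering to positivity of the pattern counts $N_\epsilon(U)$, the character expansion $\1_S = \tfrac1r\sum_j \chi^j$ plus an $O(k)$ correction at $0$, and the Weil bound $|T_{\mathbf j}(U)| \leqs (k-1)\sqrt N$ for $\mathbf j \neq \mathbf 0$ (valid since $f(X)=\prod_i (X+u_i)^{j_i}$ with exponents in $\{0,\dots,r-1\}$, not all zero, is never a perfect $r$-th power) --- is correct, but it only yields $\VCdim(\Gamma) \geqs (\tfrac12 - o(1))\log_r N$. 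This is precisely the known partial progress the paper already cites: the bound $\underline{\alpha}^{(r)} \geqs \tfrac12 \log_r 2$ implicit in \cite[Theorem~1.6]{MSW}. So the correct part of your proposal reproduces the state of the art rather than advancing it.

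The genuine gap is everything past the square-root barrier, and your proposed escape routes do not work as stated. For the second-moment idea: $\E_U[N_\epsilon(U)^2]$ expands into complete character sums over pairs of $k$-tuples, i.e.\ sums of $\chi$ applied to products of up to $2k$ shifted linear factors, and Weil again gives only $O(k\sqrt N)$ per sum --- the variance estimate collapses in exactly the same range $k \approx \tfrac12 \log_r N$ where the first moment did. The additive-energy and sum-product bounds for multiplicative subgroups that you invoke give power savings for subgroups of size $N^{c}$ with $c$ bounded away from $1$, but here $|S| \asymp N/r$ is of full order and the required statement --- square-root--level control of $\bigl|\bigcap_i (S - u_i)\bigr|$ for $k \approx \log_r N$ shifts, where the expected intersection size $N r^{-k} = N^{o(1)}$ is subpolynomial --- is far beyond those tools, and GRH does not help since the relevant sums are complete and already Weil-optimal. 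For the upper bound you correctly observe that averaging over $U$ is structurally useless (one must defeat \emph{every} $k$-set), but you offer no substitute for the tiling/covering-plus-Chernoff mechanism of Sections~\ref{sec: lower} and \ref{sec: upper}, which is intrinsically probabilistic and has no known deterministic analogue for power residues. In short: your partial lower bound is correct and consistent with \cite{MSW}, but both the improvement of the exponent $\tfrac12$ to $1$ and the entire upper bound remain unproved, which is exactly why the paper states this as a conjecture.
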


We expect the natural generalization of the above to $\F_q$ for a prime power $q$ with $r \mid q-1$ to also hold as $q \to \infty$. In the notation of \cite[Equation~3]{MSW}, we conjecture that $\overline{\alpha}^{(r)} = \underline{\alpha}^{(r)} = \log_r 2$. In particular, the lower bound $\underline{\alpha}^{(r)} \geqs \tfrac12 \log_r 2$ implicit in \cite[Theorem~1.6]{MSW} cannot be improved by more than a factor of $2$ (see also, the discussion in \cite[Section~5.1]{MSW}). 

\subsection{A discussion of the proof}

The proof of Theorems \ref{thm: vcdim_translates} and \ref{thm: vcdim_graphs} relies on covering arguments and can be motivated by considering the special case $G = \Z/N\Z$. Let us briefly give a heuristic discussion of this proof, to be filled in more rigorously in Sections \ref{sec: lower} and \ref{sec: upper}. The reader may find it helpful to quickly skim through this heuristic discussion now and later return to it while digesting the details in these later sections.

By symmetry we may suppose $p\leqs 1/2$ so that $r = 1/p$. In order to show that a random subset $A \subseteq \Z/N\Z$ Bernoulli sampled with parameter $p$ has VC-dimension near $\log_r N$, we must show (i) that there is some subset of size slightly smaller than $\log_r N$ which is shattered by the family of translates of $A$, and (ii) that all subsets of size slightly larger than $\log_r N$ will not be shattered by this family.

In fact in (i) we can consider shattering the subset $[1,k] \subset \Z/N\Z$ (in the natural embedding), where $k$ is slightly less than $\log_r N$ so that $N$ is slightly larger than $r^k$ (say around $k^{10} r^k$, and a multiple of $k$ for conceptual clarity). Represent the random set $A$ by a string of $1$'s and $0$'s inside $\Z/NZ$. The set $[1,k]$ will certainly be shattered as long as all $2^k$ binary patterns of $1$'s and $0$'s appear in sequence on the intervals $[1,k],\, [k+1,2k],\ldots$ after $A$ has been sampled. But the likelihood some given binary pattern appears on say the first of these intervals is at least $p^k$, and likewise for subsequent intervals -- which can be considered independently since they are disjoint and together form a tiling of $\Z/N\Z$. Since there are slightly more than $r^k = (1/p)^k$ such intervals, it works out that every binary pattern is indeed very likely to appear, and $[1,k]$ is very likely to be shattered.

By contrast, in (ii), we may consider at first the subset $[1,k]$ where $k$ is slightly more than $\log_r N$ so that $N$ is slightly less than $r^k$ (say around $r^k/k^{10}$). In this case a similar analysis as above will show that it is very unlikely \emph{any} fixed binary pattern appears among the intervals $[1,k],\, [k+1,2k],\ldots$ much less that all of them appear. Moreover the same analysis can be run in parallel on the interval families $[a,k+a], \,[k+a+1,2k+a],\ldots$ for $a = 0, 1, 2, \ldots ,k-1$, and so a union bound shows it is unlikely that the translates of $A$ cut out any fixed subset of $[1,k]$. (We run these analyses in parallel in order to again exploit independence on disjoint intervals.)

This argument for (ii) can be performed more generally for arbitrary subsets $U$ of cardinality $k$ in $\Z/N\Z$ but we need in the first place a collection of translates of $U$ which very nearly tile $\Z/N\Z$, and then a small number of additional translates of these approximate tilings which cover all of $\Z/N\Z$. Both of these constructions can be accomplished, the second by using a lemma of Bollob\'{a}s, Janson, and Riordan. This will show that with high probability \emph{any} subset $U$ of size $k$ is shattered.

In fact, in (i) we did not really need an exact tiling of our starting set, only an approximate tiling. Moreover the construction of this approximate tiling, and likewise the efficient covering of Bollob\'{a}s, Janson, and Riordan can be done for subsets of an arbitrary finite group. By following this strategy we prove our results.

Some readers may thus note a similarity between the problem we consider and the classical coupon collector problem as well as the problems of waiting times for runs in coin tossing. We comment further on this analogy in Section \ref{sec: concluding}.

\subsection*{Notation}
The subscript in $\log_r$ for $r > 1$ denotes the base of the logarithm. We assume the sampling probability $p$ always satisfies $0 < p < 1$ and write $r = 1/p > 1$. We use standard asymptotic notation $\ll, \gg, \asymp, O(\cdot), o(\cdot)$. The implicit constant or rate of decay in asymptotic notation may depend on $p$ (and hence $r$) but will otherwise be uniform in all parameters. We also adopt the notation that $c$ and $C$ refer to unspecified positive constants that may vary from line to line and may depend on $p$.

\subsection*{Acknowledgments}
We thank Brian McDonald, Ilya Shkredov, Olof Sisask, Caroline Terry, and Emmett Wyman for useful comments or discussions. We also thank Dave Bentin, Corentin Bodart, Sean Eberhard, Dave Rusin, and Yiftach Barnea for their comments on MathOverflow question \href{https://mathoverflow.net/q/483398}{\#483398} (accessed: 05/21/2025). This paper grew out of conversations held in visits to Purdue University and Queen's University; we are grateful to both institutions for their hospitality.

\section{Tiling and covering lemmas}
\label{sec: tilingandcovering}

We will need the following two combinatorial lemmas.

\begin{lemma} 
\label{lem: tilingv2}
Let $G$ be a finite group of cardinality $N$ and suppose $U$ is a subset of $G$ of cardinality $k$. Then, $G$ contains a subset $S = \{s_1,...,s_\ell\}$ with cardinality $ell$ satisfying $\ell \geqslant N/k^2$ such that the sets
\begin{equation} \label{Sprop} s_1U,\, s_2U,\, \cdots, s_\ell U \text{ are pairwise disjoint}. \end{equation}

\end{lemma}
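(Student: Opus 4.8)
The plan is to reformulate the pairwise-disjointness condition \eqref{Sprop} as a condition on a difference set and then build $S$ by a maximal-packing argument. The first step is the observation that for $s, s' \in G$ the left translates $sU$ and $s'U$ intersect if and only if there exist $u, u' \in U$ with $su = s'u'$, which rearranges to $s^{-1}s' = u(u')^{-1} \in UU^{-1}$, where $UU^{-1} := \{u(u')^{-1} : u, u' \in U\}$. Equivalently, $sU$ and $s'U$ are disjoint precisely when $s' \notin s\,UU^{-1}$. Since $U$ has $k$ elements there are at most $k^2$ ordered pairs $(u,u')$, so $|UU^{-1}| \leqslant k^2$; this bound is exactly where the $k^2$ in the statement comes from.

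With this dictionary in hand, I would take $S = \{s_1, \dots, s_\ell\}$ to be a subset of $G$ that is \emph{maximal} with respect to property \eqref{Sprop}. Such a set exists since $G$ is finite and any singleton trivially satisfies \eqref{Sprop}. The key point is that maximality forces the translates $s_j\,UU^{-1}$ to cover all of $G$: if some $g \in G$ lay outside $\bigcup_{j=1}^\ell s_j\,UU^{-1}$, then by the reformulation above $gU$ would be disjoint from every $s_jU$, and we could enlarge $S$ by adjoining $g$, contradicting maximality. Note that every $s_j$ itself lies in $s_j\,UU^{-1}$ because the identity belongs to $UU^{-1}$, so $S$ is in any case contained in this union.

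Having established the cover $G = \bigcup_{j=1}^\ell s_j\,UU^{-1}$, the conclusion follows from a cardinality bound:
\[
N = |G| \leqslant \sum_{j=1}^\ell \bigl|s_j\,UU^{-1}\bigr| = \ell\,|UU^{-1}| \leqslant \ell k^2,
\]
using that left translation is a bijection and hence $|s_j\,UU^{-1}| = |UU^{-1}|$. Rearranging gives $\ell \geqslant N/k^2$, as required.

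I do not anticipate a genuine obstacle here: the argument is a standard greedy/packing bound, and the only points requiring care are the correct translation of disjointness into the condition $s^{-1}s' \notin UU^{-1}$ (making sure the inverses fall on the correct side for \emph{left} translates) and the verification that $UU^{-1}$, rather than $U^{-1}U$ or $UU$, is the relevant set. One could equivalently phrase the construction as an explicit greedy loop, repeatedly adjoining any admissible element until none remains; the maximal-set formulation simply packages this termination condition cleanly.
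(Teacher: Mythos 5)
Your proposal is correct and is essentially the paper's own proof: both take a maximal set $S$ satisfying \eqref{Sprop}, use maximality to show $G$ is covered by $S\,UU^{-1}$, and conclude via $|UU^{-1}|\leqslant k^2$ that $\ell \geqslant N/k^2$. The only cosmetic difference is that you bound $|G|$ by the union $\bigcup_j s_j\,UU^{-1}$ directly, while the paper phrases the same estimate through the product-set inequality $|SV|\leqslant |S||V|$ with $V=UU^{-1}$.
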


\begin{proof} The proof is via a greedy construction. Let $S$ be a maximal subset of $G$ with the property \eqref{Sprop}. From this, it follows that for every $g \in G$, there exists an $s \in S$ such that $sU \cap gU \neq \emptyset$, as otherwise one may add $g$ to $S$ without violating \eqref{Sprop}. In other words, there exist $u_1,u_2 \in U$ such that $su_1 = gu_2$ exists in $G$. This implies that $g = su_1u_2^{-1} \in SUU^{-1}$, where $U^{-1}$ is the set of inverses of elements in $U$ and we write $AB$ for the product set of $A$ and $B$. Letting $V = UU^{-1}$, we see that we have shown that $G = SV$ and $S$ satisfies \eqref{Sprop} and it remains to show the estimates on the cardinality. We have the trivial inequality $|AB| \leqslant |A||B|$ for the cardinality of the product set. From this, it follows that \[ |V| \leqs |U||U^{-1}| = k^2 \] and further, 
\[ \ell = |S| \geqs |G|/|V| \geqs N/k^2, \]
as desired.
\end{proof}

In our applications $k$ will have a size roughly proportional to $\log N$ and $\ell$ will be as large as possible, so that $\ell$ and $k\ell$ will still nearly be of size $N$, up to logarithmic factors. For this reason the reader may think of the family $s_1U,\, s_2U,\, \cdots, s_\ell U$ as an approximate ``tiling" of $G$ in the applications that will follow.

We will also use the following lemma of Bollob\'{a}s, Janson, and Riordan for coverings of a group by translates.

\begin{lemma}
\label{lem: covering}
Let $G$ be a finite group of cardinality $N$ and suppose $S$ is a subset of $G$ of cardinality $\ell$. Then $G$ contains a subset $T = \{t_1,...,t_m\}$ with cardinality $m$ satisfying $m \leqs \frac{N}{\ell}(\log \ell + 1)$ such that
\[
G = ST = \bigcup_{i=1}^m S t_i.
\]
\end{lemma}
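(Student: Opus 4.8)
The plan is to prove this by the probabilistic method, choosing the translates in two phases so as to gain a factor of $\log\ell$ rather than $\log N$. First I would record the basic covering probability for a single random right-translate. For $t$ chosen uniformly at random from $G$, a fixed element $g \in G$ lies in $St$ exactly when $t \in S^{-1}g$; since right multiplication and inversion are bijections on $G$, we have $|S^{-1}g| = |S| = \ell$, so $\Pr[g \in St] = \ell/N$. Consequently, if $t_1,\dots,t_k$ are chosen independently and uniformly, then $\Pr[g \notin \bigcup_{i=1}^k St_i] = (1-\ell/N)^k \leqs e^{-k\ell/N}$ for every fixed $g$.

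For the first phase I would take $k \approx \tfrac N\ell \log\ell$ independent uniform translates (here $\log$ denotes the natural logarithm, matching the statement). Writing $R = G \setminus \bigcup_{i=1}^k St_i$ for the uncovered set, the bound above together with linearity of expectation gives $\E|R| \leqs N e^{-k\ell/N} \leqs N/\ell$. For the second phase I would cover the residual set $R$ crudely, using one translate per remaining point: fixing any $s_0 \in S$, each $g \in R$ satisfies $g = s_0 (s_0^{-1} g) \in S(s_0^{-1}g)$, so the single translate $t_g := s_0^{-1} g$ covers $g$. Adjoining these $|R|$ translates to the first $k$ produces a set $T$ with $ST = G$ and $|T| \leqs k + |R|$. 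Taking expectations, $\E[k + |R|] \leqs \tfrac N\ell\log\ell + \tfrac N\ell = \tfrac N\ell(\log\ell + 1)$, so some realization of the random construction yields a $T$ of at most this size, which is the assertion of the lemma.

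The point requiring care, and the reason for the two-phase structure, is the gain of $\log \ell$ in place of $\log N$. A one-shot union bound, or equivalently running the random (or a greedy) covering until \emph{every} point is covered, only forces the uncovered count below $1$ after roughly $\tfrac N\ell\log N$ steps, which is wasteful when $\ell$ is small compared to $N$. Stopping the random phase once the \emph{expected} number of uncovered points has dropped to $N/\ell$, and then mopping up the remainder one point at a time, is exactly what balances the two contributions and produces the sharper bound. The only remaining subtlety is the integrality of $k$; this is routine bookkeeping, affecting the estimate by at most an additive constant that is absorbed into the stated form $\tfrac N\ell(\log\ell + 1)$.
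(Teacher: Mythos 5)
Your proof takes a genuinely different route from the paper's: the paper does not prove this lemma at all, but quotes it as Corollary~3.2 of Bollob\'as--Janson--Riordan \cite{bollobasjansonriordan}, whose argument is a greedy construction (repeatedly pick the translate covering the most uncovered elements). Your two-phase probabilistic argument --- independent random translates until the expected uncovered count drops to $N/\ell$, then one translate per surviving point --- is the classical Alon--Spencer dominating-set argument transplanted to groups, and it is a legitimate self-contained alternative; the key computation $\Pr[g\in St]=\ell/N$ (via $g \in St \iff t \in S^{-1}g$) and the independence across the draws are both correct, as is the mop-up phase.

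However, your final step has a genuine, if small, gap: the lemma asserts the clean bound $m\leqs \frac N\ell(\log\ell+1)$, and an additive constant \emph{cannot} be ``absorbed into the stated form,'' since that form has no slack. Concretely, the number of phase-one draws must be an integer, and with $k=\lceil\frac N\ell\log\ell\rceil$ your estimates give only $\E[k+|R|]\leqs \frac N\ell(\log\ell+1)+1$, which does not imply the stated inequality. Two routine repairs exist. (i) Replace the fixed number of uniform draws by Bernoulli sampling: include each $t\in G$ in the first phase independently with probability $\pi=\log\ell/\ell$ (note $\pi\leqs 1$); then a fixed $g$ is uncovered with probability $(1-\pi)^{\ell}\leqs e^{-\pi\ell}=1/\ell$, so $\E\big[|T_1|+|R|\big]\leqs N\pi+N/\ell=\frac N\ell(\log\ell+1)$, with no integrality issue anywhere. (ii) Keep your setup but exploit the exact bound rather than $e^{-k\ell/N}$: writing $k=\frac N\ell\log\ell+\delta$ with $\delta\in[0,1)$, Bernoulli's inequality gives
\[
N\left(1-\tfrac{\ell}{N}\right)^{k}\leqs \tfrac N\ell\left(1-\tfrac{\ell}{N}\right)^{\delta}\leqs\tfrac N\ell\left(1-\delta\tfrac{\ell}{N}\right)=\tfrac N\ell-\delta,
\]
so the fractional excess $\delta$ in $k$ is exactly cancelled by the gain in $\E|R|$, and $\E[k+|R|]\leqs\frac N\ell(\log\ell+1)$ after all. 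With either fix, since $|T|$ is integer valued there is a realization with $|T|$ at most its expectation, and your argument then proves the lemma as stated. (For the paper's application only $m\ll k^3$ is used, so the stray $+1$ would be harmless there, but the lemma as stated needs the repair.)
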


\begin{proof}
This is Corollary 3.2 in \cite{bollobasjansonriordan} (with the convention there of left-multiplication by elements $t_i$).
\end{proof}

Note that the proof in \cite{bollobasjansonriordan} is also via a greedy construction. 

In the application below, we will use Lemma~\ref{lem: tilingv2} to construct a set and then apply Lemma~\ref{lem: covering} when $S$ is the set of its inverses, but the reader may note that Lemma~\ref{lem: covering} holds for arbitrary $S$. 

\begin{remark}
It is worth remarking that if $G$ is abelian, then the construction of $S$ in Lemma~\ref{lem: tilingv2} leads easily to Lemma~\ref{lem: covering} for the set of inverses. For in this case $G = G^{-1} = S^{-1}V^{-1}$, whence we can simply set $T = V^{-1}$. We thank Ilya Shkredov for this remark, and for a suggestion that helped us simplify the exposition for Lemma~\ref{lem: tilingv2}.
\end{remark}
\section{The lower bound for \texorpdfstring{$\VCdim(A)$}{VCdim A} in Theorem \ref{thm: vcdim_translates}}
\label{sec: lower}

The reader may check that Theorem \ref{thm: vcdim_translates} is symmetric in $p$ and $1-p$, so in a proof we may assume $p \leqs 1/2$, and thus $r = 1/p$. We carry out the proof of the upper and lower bounds for VC-dimension separately.

Thus in this section we prove the lower bound: that
\begin{equation}
\label{eq: lowerbound}
\VCdim(A) \geqs \log_r N - 10 \log_r \log_r N
\end{equation}
with probability $1 - O(1/N^\eta)$ for any fixed $\eta>0$.

Take $k = \lceil \log_r N - 10 \log_r \log_r N\rceil$ so that $N \asymp k^{10} r^k$. We take an arbitrary subset $U \subset G$ of cardinality $k$. We will prove the estimate \eqref{eq: lowerbound} by showing the likelihood $U$ is \emph{not} shattered is very small. We divide the proof into two steps.

\vspace{5pt}

\noindent \textbf{Step 1:} We observe that uniformly for all subsets $\emptyset \subseteq K \subseteq U$, 
\[
\Pr(K\text{ is not cut out of $U$ by }\T_A) \ll e^{-Ck^8},
\]
for an implicit constant and a constant $C$ which depends only on $p$.

We demonstrate this claim as follows. Let $\ell \geqs N/k^2$ and $S = \{s_1, \ldots, s_\ell\}$ be the collection of elements guaranteed by Lemma~\ref{lem: tilingv2} such that $s_1 U,\ldots, s_\ell U$ are pairwise disjoint. For any random set $A$, define 
\[
\T_A' = \{s_i^{-1} A:\, 1\leqs i \leqs \ell\}
\] 
to be a subfamily of $\T_A$.

Then we have,
\begin{align}
\label{eq: cutoutprod}
\notag \Pr[K\text{ is not cut out of }U\text{ by }\T_A] &\leqs \Pr[K\text{ is not cut out of }U\text{ by }\T'_A] \\
\notag &= \Pr\left[\bigwedge_{1 \leqs i \leqs \ell} K \neq s_i^{-1}A \cap U \right]\\
\notag &= \Pr\left[\bigwedge_{1 \leqs i \leqs \ell} s_i K \neq A \cap s_i U \right] \\
&= \prod_{1\leqs i \leqs \ell} \Pr\left[ s_i K \neq A \cap s_i U\right],
\end{align}
with the last step following because each event in the product will be independent, as the sets $s_i U$ are pairwise disjoint.

Note that
\[
\Pr\left[ s_i K = A \cap s_i U \right] = p^{|K|}(1-p)^{k-|K|} \geqs p^k,
\]
because $p \leqs 1-p$ and $|U| = k$. Recalling that $r = 1/p$, we see that the right hand side of \eqref{eq: cutoutprod} is
\[
\leqs (1-r^{-k})^{\ell+1}.
\]
Since $\ell \gg k^8 r^k$, a simple calculus exercise yields that the right hand side of the above is $O\big(e^{-Ck^8}\big)$ as claimed.

\vspace{5pt}

\noindent \textbf{Step 2:} We leverage the estimate in Step 1 through a crude union bound. We use the same subset $U$ as in Step 1. Note
\begin{align*}
\Pr[\VCdim(A) < k] &\leqs \Pr[U \text{ is not shattered by }\T_A]\\
&= \Pr\left[\bigvee_{\emptyset \subseteq K \subseteq U} K \text{ is not cut out of } U \text{ by } \T_A\right]\\
&\leqs \sum_{\emptyset \subseteq K \subseteq U} \Pr\left[K \text{ is not cut out of } U \text{ by }\T_A\right]\\
&\ll 2^k e^{-Ck^8}.
\end{align*}

Since $k \asymp \log N$ the above is $O(1/N^\eta)$ for any fixed $\eta > 0$. Recalling the definition of $k$, we see that we have proved \eqref{eq: lowerbound}.

\section{The upper bound for \texorpdfstring{$\VCdim(A)$}{VCdim A} in Theorem \ref{thm: vcdim_translates}}
\label{sec: upper}

We turn to the upper bound, showing now that
\begin{equation}
\label{eq: upper bound}
\VCdim(A) \leqs \log_r N + 10 \log_r \log_r N
\end{equation}
with probability $1 - O(1/N^\eta)$ for any fixed $\eta>0$.

As above we lose no generality in assuming $p\leqs 1/2$, so that $r=1/p$.

For the upper bound case, we take $k = \lfloor \log_r N + 10 \log_r \log_r N \rfloor$, so that $N \asymp r^k/k^{10}$. Again we divide our proof into two steps, beginning by showing for these parameters if $U \subseteq G$ is an arbitrary subset of cardinality $k$ the likelihood that $U$ \emph{is} shattered is very small.

\vspace{5pt}

\noindent \textbf{Step 3:} We show there is a constant $C>0$ such that for any $U \subseteq G$ of cardinality $k$,
\begin{equation}
\label{eq: shatteredissmall}
\Pr[U\text{ is shattered by }\T_A] \ll e^{-Ck^6}
\end{equation}

The outline of this step is as follows: using lemmas for covering and tiling, we show that the family $\T_A$ can be replaced by the union of a small number of families of the sort $\{\sigma_1 A, \ldots, \sigma_\ell A\}$ where $\sigma_1^{-1} U, \ldots, \sigma_\ell^{-1} U$ are pairwise disjoint. If $U$ is shattered by $\T_A$, we show some such family must cut out a large number of subsets of size nearly $k$. But exploiting independence, we show the probability of this is small.

In more detail, we set up our tiling and covering as follows. Using Lemma \ref{lem: tilingv2} we can choose $s_1,\ldots, s_\ell$ such that $s_1 U, \ldots , s_\ell U$ are pairwise disjoint with $\ell = \lceil N/k^2 \rceil$. Now set $S^{-1} = \{s_1^{-1},\ldots,s_\ell^{-1}\}$. 

We now use Lemma \ref{lem: covering} to choose $t_1, \ldots, t_m$ with 
\[
m\leqs \tfrac{N}{\ell}(\log \ell+1) \ll k^3
\] 
such that $G = \bigcup_{j=1}^m S^{-1}t_j$. We define the families $\T_A^{(1)},\ldots, T_A^{(m)}$ by 
\[
\T_A^{(j)} = \{gA:\, g \in S^{-1} t_j\}
\]
and note that this implies
\begin{equation}
\label{eq: Tj_covers}
\T_A = \bigcup_{j=1}^m \T_A^{(j)},
\end{equation}
and for $1 \leqs j \leqs m$,
\begin{equation}
\label{eq:indep_trials}
t_j^{-1}s_1 U\,, \ldots, \,t_j^{-1} s_\ell U\; \text{are disjoint.}
\end{equation}

For reasons that will be apparent momentarily, let us restrict our attention to subsets of cardinality $k-10$ which are cut out of $U$. We have
\begin{equation}
\label{eq: cardinality k-10}
\Pr[U \text{ is shattered by } \T_A] \leqs \Pr\left[ \bigwedge_{\substack{K \subseteq A \\ |K| = k-10}} K \text{ is cut out of } U \text{ by } \T_A\right]. 
\end{equation}

There are $\binom{k}{10}$ subsets $K \subseteq U$ of cardinality $k-10$. On the right hand side of \eqref{eq: cardinality k-10}, we are considering an event in which all $\binom{k}{10}$ are cut out by $\T_A$, so by the pigeonhole principle, for some $j$ we must have that $\T^{(j)}_A$ cuts out at least $\binom{k}{10}/m$ subsets of $U$ of cardinality $k-10$. For sufficiently large $k$ we have $\binom{k}{10}/m \geqs k^6$. Therefore for sufficiently large $k$, the right hand side of \eqref{eq: cardinality k-10} is bounded by
\begin{equation}
\label{eq: cutsout k^6}
\Pr\left[\bigvee_{j=1}^m \; \T_A^{(j)}\text{ cuts out at least } k^6 \text{ subsets of } U \text{ of cardinality } k-10 \right],
\end{equation}

Fixing $j$, we abbreviate $t = t_j$, and consider the event:
\begin{equation}
\label{eq: event_description}
\T_A^{(j)}\text{ cuts out at least } k^6 \text{ distinct subsets of } U \text{ of cardinality } k-10.
\end{equation}
If this event occurs and these subsets are labeled $H_1,\ldots,H_h$ for $h \geqs k^6$, we must have for each $1\leqs u \leqs h$
\[
H_u = (s_v^{-1} t A) \cap U\quad \text{for some } v = v(u).
\]
But this is the same as
\[
t^{-1} s_v H_u = A \cap (t^{-1} s_v U)\quad \text{for some } v = v(u).
\]
Furthermore, for distinct indices $u$, the corresponding indices $v$ must be distinct, for otherwise the sets $H_u$ would coincide.

Thus, when the event \eqref{eq: event_description} occurs, we must have
\[
|A \cap (t^{-1} s_v U)| = k-10
\]
for at least $k^6$ values of $v$. Define the indicator random variables
\[
X_v = \mathbf{1}(|A \cap (t^{-1} s_v U)| = k-10), \quad 1 \leqs v \leqs \ell.
\]
The probability of the event \eqref{eq: event_description} is thus no more than
\[
\Pr[X_1+\cdots + X_\ell \geqs k^6]
\]

By \eqref{eq:indep_trials}, the random variables $X_1,\ldots,X_\ell$ are independent Bernoulli random variables, and we have $\E\, X_v = \binom{k}{10} p^{k-10} (1-p^{10}) \leqs k^{10} p^{k-10}$, so that 
\[
\E[X_1+\cdots + X_\ell] \leqs \ell k^{10} p^k \leqs Ck^{-2},
\]
for some constant $c > 0$. Chernoff's inequality\footnote{Chernoff's inequality tells us that for Bernoulli random variables $Y_i$ with parameters $p_i$, if $\E[Y_1+\cdots + Y_m] = \mu,$ then for $t \geqs \mu$, we have 
\[
\Pr[Y_1+\cdots + Y_m \geqs t] \leqs e^{-\mu} (e\mu/t)^t.
\]
See \cite[Theorem 2.3.1]{vershynin}.} therefore yields
\[
\Pr[X_1+\cdots + X_\ell \geqs k^6] \leqs (e c k^{-2}/k^6)^{k^6} \ll \exp(-C k^6 \log k),
\]
for a constant $C > 0$. Since this holds for each $j$, by a union bound \eqref{eq: cutsout k^6} is bounded by
\[
\leqs m \cdot \exp(- C k^6 \log k) \ll e^{-C k^6}.
\]
This proves the claim \eqref{eq: shatteredissmall}.

\vspace{5pt}

\noindent \textbf{Step 4:} We use the shattering bound proved in Step 3 to deduce that it is very unlikely the VC-dimension of $A$ is greater than or equal to $k$. For note that
\begin{align*}
\Pr[\VCdim(A) \geqs k] &= \Pr\left[\bigvee_{\substack{U \subseteq G \\ |U| = k}} U \text{ is shattered by } \mathcal{T}_A\right] \\
&\leqs \sum_{\substack{U \subseteq G \\ |U| = k}} \Pr[U \text{ is shattered by } \mathcal{T}_A] \\&
\ll \binom{N}{k} e^{-Ck^6} \ll r^{k^2} e^{-Ck^6}.
\end{align*}
Because $k \asymp \log N$, this is $O(1/N^\eta)$ for any $\eta>0$, which implies the upper bound \eqref{eq: upper bound} we desired.

\section{Concluding remarks and further questions}
\label{sec: concluding}

In this paper we give, for arbitrary finite groups, a law of large numbers for the VC-dimension of the set family arising from translations of a random subset (equivalently for the set family of neighborhoods of a random Cayley graph induced by this subset). One natural question raised by this work remains the behavior of the VC-dimension of (translates of) pseudorandom deterministic subsets, such as second-and-higher-power residues as discussed in \S\ref{subsec: numtheory}.

We have also worked exclusively with Bernoulli sampled random subsets. There is another natural model for a random subset $A$, namely for a parameter $d$, samples all subsets of size $d$ uniformly, as discussed in Remark~\ref{rem: graphmodels}. One may expect this leads to the same answer we have obtained here where $d/N = p$, but we do not pursue this here.

Finally, we have already mentioned the resemblance between this problem and the classical coupon collector problem -- which asks one to characterize the number of draws necessary to collect all of a collection of $m$ types of coupons, if the probability of collecting each type in each draw is uniform or arising from some other distribution -- as well as the problem of waiting times for runs in coin tossing -- which asks one to characterize the length of time necessary to wait to see all binary patterns of length $k$ in repeated (possibly biased) coin tosses. In fact in both cases not only are laws of large numbers known, but limiting distributions have been found as well (see e.g. \cite{erdosrenyi,neal} for the coupon collectors problem, and \cite{mori1991,mori1995} for runs of coins). It would be interesting if limit laws can be found for $\VCdim(A)$ also.

\bibliography{VCdim}
\bibliographystyle{plain}
\newpage 
\end{document}